\pdfoutput=1
\documentclass[11pt]{amsart}
\usepackage{amsmath,amssymb,amsthm,mathtools}
\usepackage[margin=1in]{geometry}
\usepackage{booktabs}
\usepackage[shortlabels]{enumitem}
\usepackage[hidelinks]{hyperref}
\usepackage[capitalise]{cleveref}
\numberwithin{equation}{section}
\newtheorem{theorem}{Theorem}[section]
\newtheorem{proposition}[theorem]{Proposition}
\newtheorem{lemma}[theorem]{Lemma}
\newtheorem{corollary}[theorem]{Corollary}
\theoremstyle{definition}

\theoremstyle{remark}

\newtheorem*{remark*}{Remark}

\newcommand{\F}{\mathbb F}

\newcommand{\R}{\mathbb R}
\newcommand{\C}{\mathbb C}
\newcommand{\Z}{\mathbb Z}
\renewcommand{\epsilon}{\varepsilon}
\renewcommand{\leq}{\leqslant}
\renewcommand{\geq}{\geqslant}
\newcommand{\CP}{\mathbb{CP}}
\newcommand{\Sph}{\mathbb S}
\newcommand{\av}{\operatorname{av}}
\DeclareMathOperator{\rank}{rank}
\DeclareMathOperator{\tr}{tr}

\setlist{itemsep=0.35em,topsep=0.5em}

\title{Polygons and coplanar points on submanifolds of codimension two}
\author{Sohail(Neel) Sarkar}
\address{University of Toronto, Toronto, ON, Canada}
\email{sohail.sarkar@mail.utoronto.ca}
\date{}
\begin{document}
\vspace*{-4mm}

\begin{abstract}
Problem 73 on Green's list of open problems asks whether a smooth submanifold of codimension two in $\R^n$ must meet some affine two-plane in five points when $n$ is large.
In this paper, we answer this question for every $n\geq6$.
Every nonempty relatively open subset of such a submanifold contains the vertices of an affinely regular pentagon, and the bound $n\geq6$ is sharp for affinely regular pentagons.
More generally, for every prime $p\geq5$, the graph of a continuous map from a ball in $\R^{p-1}$ to $\R^2$ contains an affinely regular $p$-gon.
A second argument gives six coplanar points when $n\geq7$.
Both proofs find the points by making the low Fourier modes of the graph map vanish along a circle.
\end{abstract}
\maketitle
\vspace{-1.5\baselineskip}

\section{Introduction}

\subsection{The problem}
A strictly convex hypersurface in $\R^n$ meets every line in at most two points.
Problem 73 on Green's list \cite{Green} asks whether a similar bound can hold one codimension higher:
\begin{quote}
``Let $\Gamma$ be a smooth codimension 2 surface in $\R^n$. Must $\Gamma$
intersect some 2-dimensional plane in 5 points, if $n$ is sufficiently
large?''
\end{quote}
Since the dimensions of $\Gamma$ and of a two-plane add up to $n$, a two-plane in general position meets $\Gamma$ in a discrete set of points.
The question is how many of these points some plane must contain.

The problem is local.
According to Green, $\Gamma$ can be taken to be the graph of a smooth map $(-2,2)^{n-2}\to\R^2$ on the cube $(-1,1)^{n-2}$.
For a submanifold of codimension $d$, he denotes by $f(n,d)$ the number of points through which some affine $d$-plane must pass.
Green observes that $f(n,d)\geq2^d$ for large $n$.
When $d=2$, these four points are the vertices of a parallelogram, so five is the first open case.

In the other direction, $f(n,d)$ is finite.
This was sketched by Thom \cite{Thom} in 1969 and proved in detail by Chaperon and Meyer \cite{ChaperonMeyer} in 2009.
Green also asks whether $f(n,2)$ is bounded independently of $n$.

Throughout the paper, $\langle\cdot,\cdot\rangle$ and $|\cdot|$ denote the Euclidean inner product and norm, all submanifolds are embedded, and a subset of $\Gamma$ is relatively open if it is open in the subspace topology.

\subsection{Main results}
For a submanifold of codimension two in $\R^n$, our results give the following numbers of coplanar points.
In each case the points can be found inside any prescribed nonempty relatively open subset.

\begin{center}
\renewcommand{\arraystretch}{1.25}
\begin{tabular}{@{}lll@{}}
\toprule
Dimension & Points in one affine two-plane & Result\\
\midrule
$n=6$ & $5$, the vertices of an affinely regular pentagon
 & Corollary~\ref{cor:submanifold}(1)\\
$n=7$ & $6$ & Corollary~\ref{cor:submanifold}(2)\\
$n\geq8$ & at least $7$, the vertices of an affinely regular heptagon
 & Corollary~\ref{cor:submanifold}(1)\\
$n\geq p+1$ & $p$, for every prime $p\geq5$ & Corollary~\ref{cor:submanifold}(1)\\
\bottomrule
\end{tabular}
\end{center}

An \emph{affinely regular $p$-gon} is the image of the vertices of a regular $p$-gon under an injective affine map.
Its vertices can be written as
\[
 a+b\cos(2\pi j/p)+c\sin(2\pi j/p),\qquad 0\leq j<p,
\]
where $b$ and $c$ are linearly independent.
It is \emph{regular} if, in addition, $b$ and $c$ are orthogonal and have the same length.

We state both main results for graphs, where the hypotheses are weakest, and deduce the statements for submanifolds in Corollary~\ref{cor:submanifold}.

\begin{theorem}[Polygons on graphs]\label{thm:polygon}
Let $p\geq5$ be prime, let $B\subset\R^m$ be an open ball with centre $x_0$, let $g:B\to\R^2$ be continuous, and let $r>0$ be such that the closed ball of radius $r$ about $x_0$ lies in $B$.
\begin{enumerate}
\item If $m\geq p-1$, there is a regular $p$-gon $y_0,\ldots,y_{p-1}$,
inscribed in a circle of radius $r$ centred at $x_0$, such that the points $(y_j,g(y_j))$, $0\leq j<p$, form an affinely regular $p$-gon.
\item If $m\geq p$, the polygon $y_0,\ldots,y_{p-1}$ can be chosen so
that these points form a regular $p$-gon.
\end{enumerate}
\end{theorem}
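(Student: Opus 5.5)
The plan is a $\Z_p$-equivariant zero-finding argument over the space of oriented orthonormal $2$-frames. The Euler-class input comes from the oriented Grassmannian of $2$-planes.

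\textbf{Reformulation.} It suffices to treat $m=p-1$ in (1) and $m=p$ in (2), by restricting $g$ to an affine subspace through $x_0$. Let $V=V_2(\R^m)$ be the Stiefel manifold of orthonormal pairs $(u,v)$. Set $\theta_j=2\pi j/p$, $y_j=x_0+r(\cos\theta_j\,u+\sin\theta_j\,v)$ and $z_j=g(y_j)\in\R^2=\C$. Define
\[
F_k(u,v)=\sum_{j=0}^{p-1} z_j\,\omega^{-jk},\qquad \omega=e^{2\pi i/p}.
\]
The points $(y_j,z_j)$ form an affinely regular $p$-gon exactly when $z_j=a+b'\cos\theta_j+c'\sin\theta_j$. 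This is equivalent to $F_k=0$ for $2\leq k\leq p-2$. Injectivity of the affine map is then automatic, because the $\R^m$-components $ru$ and $rv$ of the vectors $b$ and $c$ are already independent.

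For the regular case (2), write $b'\cos\theta+c'\sin\theta=\alpha e^{i\theta}+\beta e^{-i\theta}$. The conditions $|(ru,b')|=|(rv,c')|$ and $(ru,b')\perp(rv,c')$ reduce to $|b'|=|c'|$ and $b'\perp c'$, and these hold iff $\alpha=0$ or $\beta=0$. So (2) requires the additional equation $F_{p-1}=0$, say.

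\textbf{Equivariance.} Let $\Z_p$ act freely on $V$ by rotating the frame $(u,v)$ through $2\pi/p$ in its own plane (free because $p$ is odd). This shifts $j\mapsto j+1$, so $F_k$ is multiplied by $\omega^k$. Thus $F=(F_k)$ is a $\Z_p$-equivariant map $V\to W$, where $W=\bigoplus_k\C_k$ and $\C_k$ carries weight $k$. The sum runs over $2\leq k\leq p-2$ in case (1), and additionally includes $k=p-1$ in case (2).

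Suppose $F$ had no zero. Then the vector bundle $E=V\times_{\Z_p}W\to V/\Z_p$ would have a nowhere-zero section, so its mod $p$ Euler class would vanish. I will show instead that this class is nonzero.

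\textbf{Euler class computation.} Each weight-$k$ representation of $\Z_p$ extends to the circle. Hence $V\times_{\Z_p}\C_k$ is the pullback of $L^{\otimes k}$ under the circle-bundle map $\pi:V/\Z_p\to \widetilde G_2(\R^m)$. Here $\widetilde G_2(\R^m)$ is the oriented Grassmannian, and $L$ is its tautological plane viewed as a complex line with Euler class $e$. Therefore
\[
e(E)=\pi^*\Bigl(\prod_k k\Bigr)e^{N},
\]
with $N=p-3$ in case (1) and $N=p-2$ in case (2). In both cases $\prod k\equiv (p-2)!\equiv 1\pmod p$.

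The circle bundle $\pi$ has fibre $S^1/\Z_p$ and Euler class $p\,e\equiv 0\pmod p$. By the Gysin sequence, $\pi^*$ is therefore injective on mod $p$ cohomology.

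It remains to show $e^{N}\neq 0$ in $H^{2N}(\widetilde G_2(\R^{N+2});\F_p)$, which is the top degree. The Grassmannian $\widetilde G_2(\R^{N+2})$ is the complex quadric $Q_N\subset\CP^{N+1}$. Under this identification $e$ is the hyperplane class $h$ up to sign, and $h^N=2[\mathrm{pt}]$ because the quadric has degree $2$. Since $p$ is odd, $e^N\neq0$ mod $p$, giving the contradiction.

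\textbf{Main obstacle.} I expect the most care to be needed in the last two steps. First, one must verify the identification of $V\times_{\Z_p}\C_k$ with $\pi^*L^k$, including the orientation and sign conventions. Second, one must check the identity $e=\pm h$ on $Q_N$ for both parities of $N$, since the problem needs $N=p-3$ (even) and $N=p-2$ (odd). If the identification with the quadric is awkward, the fallback is to compute $\langle e^N,[\widetilde G_2]\rangle$ via the Euler characteristic or via the Schubert calculus of $Q_N$ directly.
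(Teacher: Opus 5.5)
Your proposal is correct and is essentially the paper's proof: the same discrete Fourier coefficients of orders $2,\ldots,p-2$ (plus $p-1$ for part (2)) and the same $C_p$-equivariant zero problem on $V_2(\R^m)$. It is settled by the same Chern-class argument via the circle bundle $V_2(\R^m)/C_p\to Q$ with Euler class $pe$, Gysin injectivity mod $p$, and $\langle e^{m-2},[Q]\rangle=\pm2$; the paper skips your reduction to $m=N+2$ by noting that $e^{s}\neq0$ for every $s\leq m-2$, and there are two harmless slips: in case (2) the product of the weights is $(p-1)!\equiv-1\pmod p$ rather than $1$ (only nonvanishing mod $p$ matters), and the action is free for every $p$, not because $p$ is odd.
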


Only the continuity of $g$ is used in the proof.
The dimension bound in part (1) cannot be lowered for pentagons.

\begin{proposition}[Sharpness for pentagons]\label{prop:sharp}
The smooth submanifold
\[
 \{(x_1,x_2,x_3,x_1^2,x_2^2):(x_1,x_2,x_3)\in\R^3\}\subset\R^5
\]
contains no affinely regular pentagon.
\end{proposition}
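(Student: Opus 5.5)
The plan is to use discrete Fourier analysis on $\Z/5$, which matches the Fourier-mode viewpoint of the abstract. Suppose five points $P_j$, $0\leq j<5$, of the submanifold form an affinely regular pentagon, $P_j=a+b\cos(2\pi j/5)+c\sin(2\pi j/5)$ with $b,c\in\R^5$ linearly independent. Put $\zeta=e^{2\pi i/5}$. Each coordinate of $P_j$ has the form
\[
 \alpha+w\zeta^j+\bar w\zeta^{-j}\qquad(\alpha\in\R,\ w\in\C).
\]
As a function on $\Z/5$, such a coordinate therefore has Fourier support in the frequencies $\{0,1,4\}$. The key fact is that the characters $\zeta^{kj}$, $k=0,\dots,4$, are linearly independent functions on $\Z/5$, so Fourier coefficients can be compared frequency by frequency.

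Apply this to the first coordinate, $x_1(P_j)=\alpha+w\zeta^j+\bar w\zeta^{-j}$. Squaring gives
\[
 x_1(P_j)^2=\bigl(\alpha^2+2|w|^2\bigr)+2\alpha w\zeta^j+2\alpha\bar w\zeta^{-j}+w^2\zeta^{2j}+\bar w^2\zeta^{3j}.
\]
Its frequency-$2$ coefficient is exactly $w^2$, because $0,1,4,2,3$ are distinct residues mod $5$, so no other term contributes to frequency $2$. Since $P_j$ lies on the submanifold, its fourth coordinate equals $x_1(P_j)^2$. But that fourth coordinate is also a coordinate of an affinely regular pentagon, so its frequency-$2$ coefficient is $0$. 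Hence $w^2=0$, so $w=0$ and $x_1(P_j)=\alpha$ is constant in $j$. The same argument, using the fifth coordinate, shows that $x_2(P_j)$ is constant.

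It follows that coordinates $1,2,4,5$ of the $P_j$ are all constant, and only $x_3$ varies. Thus $P_j-a$ lies in the line spanned by the third basis vector. In particular the first, second, fourth and fifth coordinates of $b$ and $c$ vanish, so $b$ and $c$ are both multiples of $e_3$, contradicting their linear independence.

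The only point needing care is the independence of the characters on $\Z/5$, together with the observation that frequency $2$ is not congruent to $0$ or $\pm1$ mod $5$. This is precisely where $p=5$ (indeed any $p\geq5$) is used. I expect no real obstacle beyond stating the frequency bookkeeping cleanly.
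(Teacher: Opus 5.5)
Your argument is correct and is essentially the paper's proof. The frequency-$2$ coefficient of $x_t(j)^2$ is the square of the frequency-$1$ coefficient of $x_t(j)$ (your $w^2$, the paper's $\tfrac14(b_t-ic_t)^2$), and it must vanish because the $(t+3)$th coordinate is itself a coordinate of an affinely regular pentagon; this forces coordinates $1,2,4,5$ to be constant, so $b$ and $c$ are multiples of the third basis vector, exactly as in the paper's conclusion.
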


Since this graph contains straight lines, the example does not bound the number of coplanar points in $\R^5$.
Our second result counts coplanar points without prescribing their shape.

\begin{theorem}[Six coplanar points]\label{thm:six}
Let $m\geq5$, let $U\subset\R^m$ be a nonempty open set, and let $F,G:U\to\R$, where $F$ is of class $C^2$ and $G$ is continuous.
Then the graph $\{(x,F(x),G(x)):x\in U\}$ contains six distinct points lying in one affine two-plane.
\end{theorem}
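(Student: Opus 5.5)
The plan is to look for the six points as the vertices of a regular hexagon inscribed in a small circle in $U$, and to show that the plane of the circle can be chosen so that the lifted vertices become coplanar.

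\textbf{Setup.} Fix $x_0\in U$ and a small $r>0$. For an orthonormal pair $(u,v)\in V_2(\R^m)$ set
\[
y_j=x_0+r(\cos(\pi j/3)\,u+\sin(\pi j/3)\,v),\qquad 0\leq j<6 .
\]
These points lie in an affine two-plane $P\subset\R^m$. Their lifts $(y_j,F(y_j),G(y_j))$ are automatically distinct. They lie in a common affine two-plane if and only if each of the sequences $j\mapsto F(y_j)$ and $j\mapsto G(y_j)$ is the restriction of an affine function on $P$.

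In terms of the discrete Fourier transform on $\Z/6$, this says that the modes $0,\pm1$ are arbitrary while the modes $\pm2$ and $3$ vanish. The mode $\pm2$ coefficient is one complex number and the mode $3$ coefficient is one real number. So the coplanarity condition is the vanishing of a map
\[
\Phi=(\Phi_F,\Phi_G):V_2(\R^m)\to(\C\oplus\R)^2\cong\R^6 .
\]

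\textbf{Equivariance.} Rotating the frame $(u,v)$ by $\pi/3$ within its plane cyclically shifts the $y_j$. Under this $\Z/6$ action, $\Phi$ is equivariant for the action that multiplies the mode $2$ coefficient by $e^{2\pi i/3}$ and the mode $3$ coefficient by $-1$. Replacing $v$ by $-v$ reflects the hexagon; this gives a dihedral symmetry that conjugates the mode $2$ coefficient and fixes the mode $3$ coefficient.

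The dimension count is favourable: $\dim V_2(\R^5)=7>6$. However, the group $\Z/6$ is not a $p$-group, so a Borsuk--Ulam or Dold-type argument for $\Phi_G$ alone does not directly force a zero. This is where the $C^2$ hypothesis on $F$ is meant to enter.

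\textbf{Using $F\in C^2$.} Taylor expand $F$ at $x_0$. Then
\[
\Phi_F(u,v)=r^2\bigl(\tfrac14(H(u,u)-H(v,v))+\tfrac{i}{2}H(u,v)\bigr)\cdot c+o(r^2)
\]
for a nonzero constant $c$, where $H$ is the Hessian of $F$ at $x_0$. The mode $3$ component is $o(r^2)$, since a quadratic has no mode $3$ on a hexagon.

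I would rescale $\Phi_F$ by $r^{-2}$ and also normalize $\Phi_G$. The limit of the mode $2$ part of the rescaled $\Phi_F$ vanishes exactly on the frames spanning a plane on which $H$ restricts to a multiple of the identity. For $m\geq 3$ the set $Z$ of such frames is nonempty: restricted to $u\perp v$ the map $\Phi_F$ is an equivariant section of a circle-bundle-type line bundle, and its zero set is a codimension-two cycle. For generic $H$, $Z$ is a smooth submanifold of dimension $2m-5$ that is invariant under the rotation $SO(2)$ of the frame, with nonzero equivariant fundamental class.

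The strategy is then a homotopy/degree argument. I would restrict $\Phi_G$, together with the mode $3$ part of $\Phi_F$, to a neighbourhood of $Z$ and count zeros equivariantly. On $Z/SO(2)$, of dimension $2m-6\geq4$ when $m=5$, the remaining data is a section of the bundle associated to the representation for modes $2$ and $3$ of $G$, plus mode $3$ of $F$. I would compute its equivariant Euler class through the twisted cohomology of the Grassmannian and show it is nonzero. A small-$r$ perturbation then moves the zero off $Z$ to a genuine zero of $\Phi$.

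\textbf{Main obstacle.} I expect the main obstacle to be this Euler class computation together with the non-generic case. When $H$ is degenerate, or a multiple of the identity, I would first perturb $F$ by a small quadratic. That perturbation is not allowed, so I would instead pass to a limit: use compactness of $V_2$ and the fact that the six lifted points remain distinct for every frame, so that a limit of solutions for perturbed data is still a solution.

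If the $\Z/6$ Euler class turns out to vanish, my fallback is to enlarge the family from regular hexagons to hexagons $y_j$ at angles $\theta_j$ forming two interleaved equilateral triangles with a free offset angle. The extra parameter keeps the $\Z/3$-equivariance, and $\Z/3$ is a $p$-group, so a Dold-type theorem applies once the dimension count is checked.
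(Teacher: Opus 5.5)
There is a genuine gap. Your reduction is right: the lifted hexagon is coplanar exactly when the modes $\pm2$ and $3$ of $F$ and of $G$ vanish. The trouble is the step everything rests on, the nonvanishing of an equivariant Euler class, and that step cannot be carried out.

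First, the base is misidentified. The data left on $Z$ (the mode-$2$ coefficient of $G$ and the two mode-$3$ coefficients) is invariant only under rotating the frame by multiples of $\pi/3$. So it is not a section over $Z/SO(2)$. It is a section over $Z/(\Z/6)$, of dimension $2m-5$, of the bundle associated with $W=\R_-^2\oplus\C_{\zeta^2}$. Here $\zeta=e^{\pi i/3}$, the generator acts on $\C_{\zeta^k}$ by $\zeta^k$, and $\R_-$ is the sign representation.

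Second, this Euler class is zero. It is pulled back from $H^4(B\Z/6;\Z)\cong\Z/6$. With $x=c_1(\C_\zeta)$, it equals $c_1(\C_{\zeta^3})\,c_1(\C_{\zeta^2})=3x\cdot2x=6x^2=0$. Put differently, each Sylow subgroup fixes a nonzero vector of $W$: the Sylow $2$-subgroup fixes $\C_{\zeta^2}$, and the Sylow $3$-subgroup fixes $\R_-^2$. Adding the reflection does not help, since a Sylow $2$-subgroup of the dihedral group still fixes a real line in $\C_{\zeta^2}$. So the class vanishes on every free $\Z/6$-space, $Z$ included. Restricting to the zero set of the Hessian term does not get around the non-$p$-group obstruction you noticed, and the proposal offers nothing in its place.

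The fallback fails in the same way. For two interleaved triangles, the conditions on each function include ``mean over one triangle equals mean over the other''. That condition is $\Z/3$-invariant, so the target has a trivial summand and no Dold-type theorem applies.

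The limit argument for degenerate Hessians also breaks down. The radius allowed in the perturbative step depends on the transversality of the perturbed Hessian term. As the perturbation tends to $0$, the hexagons may shrink to $x_0$, and distinctness is lost.

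The missing idea is to stop prescribing where the six points sit; prescribing them is what forces $\Z/6$-topology on you. The paper uses the $C^2$ hypothesis to make $F$ \emph{exactly} constant on a two-parameter family of closed curves. If $D^2F(a)$ is definite on a three-dimensional subspace, a small level set of $F$ minus its tangent plane on that slice is a star-shaped surface about $a$. Every central plane section of it is a curve on which $F$ is constant. For $G$, the paper does three things on each section:
\begin{enumerate}
\item It subtracts an affine function of the point (not of the direction) to kill the modes $0,\pm1$.
\item It kills the mode $2$, which is a symmetric trace-zero operator field over $\Sph^2$, by the hairy ball argument of Lemma~\ref{lem:tensor}.
\item It obtains six zeros, at unprescribed positions, from the Sturm--Hurwitz Lemma~\ref{lem:zeros}.
\end{enumerate}
The only topology needed is that of $\Sph^2$. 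If no such subspace exists, then $\rank D^2F\leq4<m$ everywhere, and Lemma~\ref{lem:hessian} gives a segment on which $F$ is affine; the graph over it lies in an affine two-plane. This case split also covers the degenerate Hessians that your perturbation cannot reach.
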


\begin{corollary}[Submanifolds of codimension two]\label{cor:submanifold}
Let $\Gamma\subset\R^n$ be a nonempty $C^1$ submanifold of codimension two, and let $\mathcal O\subset\Gamma$ be nonempty and relatively open.
\begin{enumerate}
\item If $p\geq5$ is prime, then $\mathcal O$ contains an affinely
regular $p$-gon when $n\geq p+1$, and a regular $p$-gon when $n\geq p+2$.
\item If $\Gamma$ is of class $C^2$ and $n\geq7$, then $\mathcal O$
contains six distinct points lying in one affine two-plane.
\end{enumerate}
\end{corollary}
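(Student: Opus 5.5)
The plan is to reduce locally to graphs and then apply Theorem~\ref{thm:polygon} and Theorem~\ref{thm:six}. Pick a point $q\in\mathcal O$. Since $\Gamma$ is a $C^1$ submanifold of dimension $n-2$, its tangent space $T_q\Gamma$ is an $(n-2)$-dimensional linear subspace of $\R^n$. Choose Euclidean coordinates $\R^n=\R^{n-2}\times\R^2$ in which $T_q\Gamma=\R^{n-2}\times\{0\}$; this is a rigid motion, so it preserves affine two-planes, affinely regular polygons and regular polygons. The projection $\pi:\Gamma\to\R^{n-2}$ then has invertible differential at $q$. By the inverse function theorem ($C^1$ version), there is an open neighbourhood $W\subset\mathcal O$ of $q$ and an open ball $B\subset\R^{n-2}$ about $\pi(q)$ such that $W$ is the graph $\{(x,g(x)):x\in B\}$ of a $C^1$ map $g:B\to\R^2$. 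When $\Gamma$ is $C^2$, $g$ is $C^2$ as well. Shrinking $W$ is harmless, since $\mathcal O$ is relatively open and we only need $W\subset\mathcal O$.

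For part (1), set $m=n-2$. If $n\geq p+1$ then $m\geq p-1$. Choose $r>0$ with the closed ball of radius $r$ about the centre inside $B$. Theorem~\ref{thm:polygon}(1) then gives points $(y_j,g(y_j))\in W\subset\mathcal O$ forming an affinely regular $p$-gon in the graph coordinates, and hence in the original coordinates, because the change of coordinates is an isometry. Likewise, if $n\geq p+2$ then $m\geq p$, and Theorem~\ref{thm:polygon}(2) gives a regular $p$-gon. Regularity survives because rigid motions preserve orthogonality and lengths of the vectors $b,c$. Only continuity of $g$ is needed here, and that is already available.

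For part (2), take $n\geq7$, so $m=n-2\geq5$. Write $g=(F,G)$; both components are $C^2$. Applying Theorem~\ref{thm:six} with $U=B$ gives six distinct points of the graph, hence of $\mathcal O$, lying in one affine two-plane. Pulling back by the rigid motion preserves this.

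The only point requiring care is the local graph representation: making sure the graph patch sits inside $\mathcal O$, and that the regularity class ($C^1$, respectively $C^2$) transfers to $g$. Both follow from the inverse function theorem applied to $\pi$ restricted to a local parametrization of $\Gamma$ near $q$, which carries the $C^k$ regularity. No other obstacle arises.
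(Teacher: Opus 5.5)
Your proposal is correct and follows the paper's proof essentially step for step. Like the paper, you use a rigid motion that makes the tangent space horizontal, apply the inverse function theorem to write a patch inside $\mathcal O$ as a graph over a ball in $\R^{n-2}$, invoke Theorems~\ref{thm:polygon} and~\ref{thm:six} with $m=n-2$, and note that rigid motions preserve coplanarity and both kinds of polygon.
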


\begin{proof}
Choose a point of $\mathcal O$, and apply a rigid motion of $\R^n$ that moves this point to the origin and its tangent space to $\R^{n-2}\times\{0\}$.
By the inverse function theorem, applied to the orthogonal projection onto $\R^{n-2}\times\{0\}$, a neighbourhood of the origin in the image of $\Gamma$ is the graph of a map from an open ball about the origin in $\R^{n-2}$ to $\R^2$.
This map is of class $C^1$, and of class $C^2$ in part (2).
After shrinking the ball, we may assume that the graph lies in the image of $\mathcal O$.
Theorems~\ref{thm:polygon} and~\ref{thm:six} with $m=n-2$ now give the required points.
Rigid motions preserve distinct points, affine two-planes, and both kinds of polygon.
\end{proof}

Taking $p=5$, five coplanar points exist in every dimension $n\geq6$, which answers Green's question for all $n\geq6$.
In Green's notation, we obtain $f(n,2)\geq p$ for every prime $p$ with $5\leq p\leq n-1$.

\subsection{Outline of the proofs}
Both proofs seek points of a plane at which the graph map agrees with an affine map.
Indeed, let $Q\subset\R^m$ be an affine two-plane, and let $y_1,\ldots,y_k$ be points of $Q$.
If a map $\phi$ with values in $\R^2$ agrees at these points with an affine map $\ell:Q\to\R^2$, then the graph points $(y_j,\phi(y_j))$ lie in the affine two-plane $\{(y,\ell(y)):y\in Q\}$.

When the points lie on a circle in $Q$, this condition can be expressed in terms of Fourier modes: the restriction of an affine function to a circle contains only the modes $0$ and $\pm1$.

For Theorem~\ref{thm:polygon}, we evaluate $g$ at the vertices of a regular $p$-gon of radius $r$ centred at $x_0$.
These values agree with an affine map on the plane of the polygon exactly when their discrete Fourier coefficients of orders $2,\ldots,p-2$ vanish.
This gives $p-3$ complex conditions on the orthonormal frame spanning the polygon, and the frame is determined only up to a rotation through $2\pi/p$.
A cohomology computation on the quotient of the frame space, Lemma~\ref{lem:zero}, shows that these conditions can be satisfied when $p-3\leq m-2$.
A regular polygon requires, in addition, that the coefficient of order $p-1$ vanishes, and therefore one more dimension.

The proof of Theorem~\ref{thm:six} instead uses the Hessian of $F$.
If the Hessian of $F$ is never definite on a three-dimensional subspace, then its rank is at most four, and the constant rank theorem gives a segment on which $F$ is affine.
The graph over this segment lies in a plane.

Otherwise, on a suitable three-dimensional slice, $F$ minus its tangent plane has a strict local minimum, and a small level set $\Sigma$ of this function is a closed surface around the minimum.
On every section of $\Sigma$ by a plane $E$ through the minimum, $F$ agrees with an affine function.
It remains to make $G$ agree with an affine function at six points of one such section.

Subtracting a suitable affine function eliminates the modes $0$ and $\pm1$ of $G$ along every section.
The mode $\pm2$ imposes two real conditions, and the planes $E$ form a two-parameter family.
A hairy ball argument, Lemma~\ref{lem:tensor}, shows that these two conditions can be met.
Once the modes $0$, $\pm1$ and $\pm2$ vanish, the Sturm--Hurwitz theorem gives six zeros.

\subsection{Earlier work}
Knaster's problem asks for configurations of points on a sphere at which every continuous map takes equal values after a suitable rotation.
In 1990, Makeev \cite[p.~435, Remark~3]{Makeev} proved that if $p$ is an odd prime and $2k-2>2(p-1)$, then every continuous map $\Sph^{k-1}\to\R^2$ takes the same value at all vertices of some regular $p$-gon inscribed in a great circle.
Applied with $k=n-2$ to a small sphere in a graph chart of $\Gamma$, this gives a regular $p$-gon on $\Gamma$ when $n\geq p+3$.
In particular, Makeev's theorem answers Green's question for $n\geq8$ and shows that $f(n,2)\to\infty$.
Equal graph values force all $p-1$ nonzero discrete Fourier coefficients to vanish.

In 2010, Karasev and Volovikov \cite[Theorem~5]{KarasevVolovikov} proved a version that permits a deviation from constancy in one fixed direction.
Take the orbit point in their theorem in the two-dimensional summand of the regular representation of $\Z/p\Z$ on which the generator rotates through $2\pi/p$, and take the fixed direction to be $(\cos(2\pi j/p),\sin(2\pi j/p))_j$.
Their theorem then gives graph values of the form $(a+t\cos(2\pi j/p), b+t\sin(2\pi j/p))$ on a regular $p$-gon, and hence a regular $p$-gon on $\Gamma$ when $n\geq p+2$.
Thus part (2) of Theorem~\ref{thm:polygon} also follows from their theorem.
Part (1) asks only for affine regularity and therefore requires one dimension less.
Proposition~\ref{prop:sharp} shows that this threshold is sharp for pentagons.

The cohomology class used in Lemma~\ref{lem:zero} is the one used by Makeev in the proof of his Lemma~1 \cite[\S1]{Makeev}.
Makeev also studied polygons inscribed in planar convex figures and in plane sections of convex bodies \cite{Makeev1995}, in continuous fields of curves lying in the planes of the tautological bundle over the Grassmannian of oriented two-planes \cite{Makeev2004}, and in continuous fields of convex figures \cite{Makeev2007}.
In these settings the polygon lies in the plane of a figure, whereas in our setting it lies on a submanifold of codimension two.

To the author's knowledge, part (1) of Theorem~\ref{thm:polygon}, Proposition~\ref{prop:sharp} and Theorem~\ref{thm:six} are new.
These results improve the preceding bounds in dimension seven and whenever $n\geq6$ and $n-1$ is prime.
In dimension seven, Theorem~\ref{thm:six} gives six points, one more than the polygon results give in that dimension, and its proof does not use the topology of frame spaces.

\subsection{Organization}
Section~\ref{sec:polygons} contains the proofs of Theorem~\ref{thm:polygon} and Proposition~\ref{prop:sharp}.
Section~\ref{sec:six} contains the proof of Theorem~\ref{thm:six}.

\section{Proof of Theorem~\ref{thm:polygon} and
Proposition~\ref{prop:sharp}}\label{sec:polygons}

\subsection{A zero of an equivariant map}
The topological input for Theorem~\ref{thm:polygon} is the following lemma.
Denote by $V_2(\R^m)$ the space of linear isometries $U:\R^2\to\R^m$; its points are ordered pairs of orthonormal vectors.
Let $C_p$ be the cyclic group of order $p$, and put $\zeta=e^{2\pi i/p}$.
A generator of $C_p$ acts on $V_2(\R^m)$ by $U\mapsto U\circ R$, where $R$ is the rotation of $\R^2$ through the angle $2\pi/p$.
This action is free.

\begin{lemma}\label{lem:zero}
Let $m\geq3$, let $p$ be an odd prime, and let $k_1,\ldots,k_s\in\{1,\ldots,p-1\}$, where $1\leq s\leq m-2$.
Every continuous map
\[
 \Phi=(\Phi_1,\ldots,\Phi_s):V_2(\R^m)\longrightarrow\C^s
 \qquad\text{with}\qquad
 \Phi_i(U\circ R)=\zeta^{k_i}\Phi_i(U)\quad(1\leq i\leq s)
\]
has a zero.
\end{lemma}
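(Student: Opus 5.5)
The plan is to argue by contradiction using mod $p$ cohomology of the orbit space $X=V_2(\R^m)/C_p$, in the spirit of Makeev's Lemma~1. Suppose $\Phi$ has no zero. Then each $\Phi_i$ is nowhere zero on the set where it is considered, and it defines a nowhere-vanishing section of the complex line bundle
\[
L_{k_i}=V_2(\R^m)\times_{C_p}\C
\]
over $X$. Here the generator acts on $\C$ by multiplication by $\zeta^{k_i}$. Actually we only need $\Phi$ itself nonvanishing: $\Phi$ is a nowhere-zero section of $E=L_{k_1}\oplus\cdots\oplus L_{k_s}$, a complex bundle of rank $s$. Hence the top Chern class satisfies
\[
c_s(E)=\prod_i c_1(L_{k_i})=0 \quad\text{in } H^{2s}(X;\Z).
\]
Reducing mod $p$ gives $\prod_i k_i\,u=0$, where $u=c_1(L_1)$ mod $p$. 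Here I use that $L_k\cong L_1^{\otimes k}$, so $c_1(L_k)=k\,c_1(L_1)$. Since each $k_i$ is invertible mod $p$, this yields $u^s=0$ in $H^{2s}(X;\F_p)$. It therefore suffices to show that $u^{s}\neq0$ for $s\leq m-2$.

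The class $u$ is the pullback of the generator of $H^2(BC_p;\F_p)$ under the classifying map $X\to BC_p$ of the free action. To show $u^{m-2}\neq0$, I would use the fibration $V_2(\R^m)\to X\to BC_p$, equivalently the Borel construction, together with the connectivity of $V_2(\R^m)$. The Stiefel manifold $V_2(\R^m)$ is $(m-3)$-connected. Its first possibly nonzero homology is $H_{m-2}(V_2(\R^m);\Z)$, which is $\Z$ or $\Z/2$ according to the parity of $m$. Mod an odd prime $p$, the $\F_p$-cohomology of $V_2(\R^m)$ is therefore concentrated in degrees $0$ and $\geq 2m-3$ when $m$ is even, and in degrees $0$, $m-1$, $2m-3$ when $m$ is odd. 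The fibre also comes from the unit tangent bundle of $S^{m-1}$: it is $S^{2m-3}$ mod $p$ when $m$ is even, and $S^{m-1}\times S^{m-2}$ mod $p$ when $m$ is odd.

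In the Serre spectral sequence with $E_2=H^*(BC_p;\F_p)\otimes H^*(V_2;\F_p)$, the simple-coefficient condition holds because $C_p$ is a subgroup of the connected group $SO(2)$ acting on $V_2$, so $C_p$ acts trivially on cohomology. The first possible nonzero differential out of the fibre row starts in degree $m-1$ (odd $m$) or $2m-3$ (even $m$). It can only hit base classes of total degree $m$, respectively $2m-2$. Hence $H^j(BC_p)\to H^j(X)$ is injective for $j\leq m-1$. In particular $u^{s}$ survives for $2s\leq m-1$, i.e. $s\leq (m-1)/2$, but this is weaker than needed.

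\textbf{The main obstacle is reaching $s=m-2$.} Better is to replace the whole argument by the sphere-bundle reduction. Project $V_2(\R^m)/C_p\to V_2(\R^m)/SO(2)=\widetilde{G}_2(\R^m)$, the oriented Grassmannian. Here $X$ is the $S^1/C_p$-bundle, i.e.\ the circle bundle of $\gamma^{\otimes p}$, where $\gamma$ is the tautological complex line bundle on $\widetilde G_2(\R^m)$. The Gysin sequence gives
\[
H^*(X;\F_p)\cong H^*(\widetilde G_2;\F_p)/(p\,c_1(\gamma))=H^*(\widetilde G_2;\F_p)
\]
in low degrees, with $u$ corresponding to $c_1(\gamma)=e$. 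Then I would use the known mod $p$ cohomology of $\widetilde G_2(\R^m)$, which is the quadric $Q_{m-2}\subset\CP^{m-1}$. There $e$ is the hyperplane class, and $e^{m-2}$ is $2$ times the generator of the top class. This is nonzero mod odd $p$. More carefully, the Gysin sequence shows that $H^{2j}(\widetilde G_2)\to H^{2j}(X)$ has kernel $p\,e\cdot H^{2j-2}=0$ mod $p$, so it is injective in all degrees. Hence $u^{s}\neq0$ for all $s\leq m-2$, and so $u^s=0$ contradicts this. Checking these Gysin and quadric facts carefully is the step I expect to require the most care.
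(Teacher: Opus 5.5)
Your final argument is correct and is essentially the paper's proof. It uses the Gysin sequence for the circle bundle $V_2(\R^m)/C_p\to\widetilde G_2(\R^m)\cong Q$ with Euler class $pe$, which gives injectivity of $\pi^*$ mod $p$, together with $\langle e^{m-2},[Q]\rangle=\pm2$ and the top Chern class $\bigl(\prod_i k_i\bigr)\pi^*(e^s)$ of $\bigoplus_i\pi^*(L^{\otimes k_i})$.

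Two slips do not affect that argument. In the abandoned spectral-sequence paragraph you have the parities of $m$ interchanged: $V_2(\R^m)$ is a mod-$p$ homology $(2m-3)$-sphere when $m$ is odd, and has $\F_p$-cohomology in degrees $0,m-2,m-1,2m-3$ when $m$ is even. Also, ``$\prod_i k_i\,u=0$'' should read $\prod_i k_i\,u^s=0$.
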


\begin{proof}
The quotient of $V_2(\R^m)$ by all rotations of the frame is the Grassmannian of oriented two-planes in $\R^m$.
If $U$ has columns $u$ and $v$, the map $U\mapsto[u+iv]$ identifies this Grassmannian with the smooth complex quadric
\[
 Q=\{[z_1:\cdots:z_m]\in\CP^{m-1}:z_1^2+\cdots+z_m^2=0\}.
\]
Indeed, the real and imaginary parts of a nonzero vector $z$ with $\sum_iz_i^2=0$ are orthogonal and have the same positive length, and multiplying $z$ by a nonzero complex number changes the corresponding frame only by a rotation; see also \cite[Section~1, Example~3]{Gadbled}.

Let $L$ be the restriction to $Q$ of the tautological line bundle of $\CP^{m-1}$, and denote $e=c_1(L)\in H^2(Q;\Z)$.
Using the unit vector $(u+iv)/\sqrt2$ in each fibre, we identify $X=V_2(\R^m)$ with the unit circle bundle of $L$.
The class $-e$ is the hyperplane class.
Moreover, $Q$ has degree two, since $m-2$ generic hyperplanes cut $\CP^{m-1}$ in a projective line on which the quadratic form has two simple zeros.
Hence
\begin{equation}\label{eq:degree}
 \langle e^{m-2},[Q]\rangle=(-1)^{m-2}\,2.
\end{equation}

Put $Y=X/C_p$, and let $\pi:Y\to Q$ be the projection.
Taking the $p$th tensor power of a unit vector identifies $Y$ with the unit circle bundle of $L^{\otimes p}$, whose Euler class is $pe$.
With coefficients in $\F_p=\Z/p\Z$, the Gysin sequence \cite[Section~3.2]{HatcherVB} therefore contains the exact segment
\[
 H^{j-2}(Q;\F_p)\xrightarrow{\ \smile\, pe\,=\,0\ }
 H^j(Q;\F_p)\xrightarrow{\ \pi^*\ }H^j(Y;\F_p),
\]
so $\pi^*$ is injective.
Since $p$ is odd, \eqref{eq:degree} shows that $e^{m-2}$ is nonzero modulo $p$.
Consequently, $e^s$ is nonzero modulo $p$ for every $s\leq m-2$, and so is $\pi^*(e^s)$.

Now consider the complex vector bundle
\[
 E=\bigoplus_{i=1}^s\pi^*(L^{\otimes k_i})\longrightarrow Y.
\]
Under $U\mapsto U\circ R$, the vector $u+iv$ is multiplied by $\zeta^{-1}$, so $\Phi_i(U)(u+iv)^{\otimes k_i}$ depends only on the class of $U$ in $Y$.
Thus $\Phi$ defines a continuous section of $E$, which vanishes exactly where $\Phi$ vanishes.
By the Whitney sum formula, the top Chern class of $E$, reduced modulo $p$, is
\[
 c_s(E)=\Bigl(\prod_{i=1}^sk_i\Bigr)\pi^*(e^s)\neq0,
\]
because each $k_i$ is prime to $p$.
On the other hand, a nowhere-zero section of a complex vector bundle of rank $s$ spans a trivial line subbundle, whose orthogonal complement has rank $s-1$, and this would force $c_s(E)=0$.
Hence $\Phi$ has a zero.
\end{proof}

\subsection{Proof of Theorem~\ref{thm:polygon}}
We identify $\R^2$ with $\C$.
For $U\in V_2(\R^m)$, put
\[
 v_j=(\cos(2\pi j/p),\sin(2\pi j/p))\in\R^2,\qquad
 y_j(U)=x_0+rUv_j\qquad(0\leq j<p),
\]
so that $y_0(U),\ldots,y_{p-1}(U)$ is a regular $p$-gon inscribed in the circle of radius $r$ about $x_0$ in the plane $x_0+U(\R^2)$.
Every such polygon arises in this way.
We define the discrete Fourier coefficients
\[
 \Phi_k(U)=\frac1p\sum_{j=0}^{p-1}\zeta^{-kj}g\bigl(y_j(U)\bigr)
 \qquad(0\leq k<p).
\]
Since $Rv_j=v_{j+1}$, reindexing the sum gives $\Phi_k(U\circ R)=\zeta^k\Phi_k(U)$, and Fourier inversion gives
\begin{equation}\label{eq:inversion}
 g\bigl(y_j(U)\bigr)=\sum_{k=0}^{p-1}\Phi_k(U)\,\zeta^{kj}
 \qquad(0\leq j<p).
\end{equation}

We first prove part (1).
Apply Lemma~\ref{lem:zero} to $(\Phi_2,\ldots,\Phi_{p-2})$.
This map has $p-3\geq2$ components, with weights in $\{1,\ldots,p-1\}$, and $p-3\leq m-2$ because $m\geq p-1$.
At a common zero $U$, with columns $u$ and $v$, identity \eqref{eq:inversion} becomes
\[
 g(y_j)=\alpha+\beta\zeta^j+\gamma\zeta^{-j}
 =\alpha+(\beta+\gamma)\cos(2\pi j/p)+i(\beta-\gamma)\sin(2\pi j/p)
\]
for some $\alpha,\beta,\gamma\in\C$.
Since $y_j=x_0+r\cos(2\pi j/p)u +r\sin(2\pi j/p)v$, the graph points are
\[
 (x_0,\alpha)+(ru,\beta+\gamma)\cos(2\pi j/p)
 +(rv,i(\beta-\gamma))\sin(2\pi j/p).
\]
The two direction vectors are linearly independent, because their components in $\R^m$ are $ru$ and $rv$.
Hence the graph points form an affinely regular $p$-gon.

For part (2), we apply the lemma to $(\Phi_2,\ldots,\Phi_{p-1})$, which has $p-2\leq m-2$ components.
At a common zero, $g(y_j)=\alpha+\beta \zeta^j$, and the direction vectors become $(ru,\beta)$ and $(rv,i\beta)$.
Their inner product is $r^2\langle u,v\rangle+ \operatorname{Re}(\beta\,\overline{i\beta})=0$, and both have squared length $r^2+|\beta|^2$.
Hence the graph points form a regular $p$-gon.\qed

\subsection{Proof of Proposition~\ref{prop:sharp}}
Suppose that the submanifold contains an affinely regular pentagon with vertices $A+B\cos(2\pi j/5)+C\sin(2\pi j/5)$, $0\leq j<5$, where $B,C\in\R^5$ are linearly independent.
For $t=1,2$, its $t$th coordinate is
\[
 x_t(j)=a_t+b_t\cos(2\pi j/5)+c_t\sin(2\pi j/5),
\]
and its $(t+3)$th coordinate is $x_t(j)^2$.
As a function on $\Z/5\Z$, every coordinate of an affinely regular pentagon has its discrete Fourier coefficients supported on the frequencies $0$, $1$ and $4$.

Since $b_t\cos\theta+c_t\sin\theta=\tfrac12(b_t-ic_t)e^{i\theta} +\tfrac12(b_t+ic_t)e^{-i\theta}$, the coefficient of frequency $2$ in $x_t(j)^2$ equals $\tfrac14(b_t-ic_t)^2$.
Here no other term contributes, because the frequencies $0,\pm1,\pm2$ are distinct modulo five.
This coefficient must therefore vanish, and since $b_t$ and $c_t$ are real, $b_t=c_t=0$.

Consequently, the first two coordinates of the pentagon are constant, and so are the last two.
Thus $B$ and $C$ are multiples of the third basis vector, which contradicts their linear independence.\qed

\section{Proof of Theorem~\ref{thm:six}}\label{sec:six}

\subsection{Notation}
We write $\Sph^2=\{v\in\R^3:|v|=1\}$.
For $v\in\Sph^2$, the plane $v^\perp$ is the tangent plane of $\Sph^2$ at $v$, and $\pi_v$ denotes the orthogonal projection of $\R^3$ onto $v^\perp$.
For a two-dimensional linear subspace $E\subset\R^3$, let $S(E)=E\cap\Sph^2$, let $ds$ be arclength on $S(E)$, and let $I_E$ be the identity map of $E$.
For a continuous function $\phi$ on $S(E)$, with values in $\R$, in $E$, or in the linear maps of $E$, we denote its angular average by
\[
 \av_E[\phi]=\frac1{2\pi}\int_{S(E)}\phi(u)\,ds(u).
\]
Finally, for $u\in\R^3$, we write $u\otimes u$ for the linear map $z\mapsto\langle u,z\rangle u$.

\subsection{Six zeros on a circle}
The Sturm--Hurwitz theorem states that a continuous function on the circle which is orthogonal to all trigonometric polynomials of degree less than $N$ has at least $2N$ sign changes, unless it vanishes identically; see Tabachnikov \cite{Tabachnikov} for this theorem, its converse, and further references.
We need only the case $N=3$, which has a short elementary proof.

\begin{lemma}\label{lem:zeros}
Let $h:\R/2\pi\Z\to\R$ be continuous, and suppose that
\[
 \int_0^{2\pi}h(\theta)\,d\theta=0
 \qquad\text{and}\qquad
 \int_0^{2\pi}h(\theta)\cos(j\theta)\,d\theta
 =\int_0^{2\pi}h(\theta)\sin(j\theta)\,d\theta=0
 \quad(j=1,2).
\]
Then $h$ has at least six distinct zeros.
\end{lemma}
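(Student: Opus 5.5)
We argue by contradiction, using the classical sign-change argument for the case $N=3$ of the Sturm--Hurwitz theorem. First, $h$ is not identically zero, for otherwise it has infinitely many zeros and there is nothing to prove. Since $\int_0^{2\pi}h=0$, the function $h$ then takes both signs. We pass to the circle $\R/2\pi\Z$ and count \emph{sign changes}. Say $h$ changes sign at a point of a cyclically ordered partition of the circle into arcs on which $h$ is alternately $\geq0$ and $\leq0$, each arc containing points where $h\neq0$. Because the circle is closed, the number of such alternating arcs is even. We will show that it is at least six. Choosing one zero of $h$ at each boundary between consecutive arcs then gives six distinct zeros.

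Suppose the number $2k$ of alternating arcs satisfies $2k\leq4$, so $k\in\{1,2\}$. Choose boundary points $\theta_1<\cdots<\theta_{2k}$ on the circle, each lying at a zero of $h$ between arcs of opposite sign. Define the trigonometric polynomial
\[
 T(\theta)=\prod_{i=1}^{2k}\sin\Bigl(\frac{\theta-\theta_i}{2}\Bigr).
\]
This product of an even number of half-angle sines is $2\pi$-periodic. Expanding it by product-to-sum formulas, $T$ is a trigonometric polynomial of degree $k\leq2$, that is, a linear combination of $1,\cos j\theta,\sin j\theta$ with $j\leq2$. Each factor changes sign exactly once on a period of length $2\pi$, namely at $\theta_i$. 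Hence $T$ has simple zeros exactly at the $\theta_i$ and alternates in sign on the arcs between them.

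Replacing $T$ by $-T$ if necessary, we may assume that $hT\geq0$ everywhere. Moreover, $hT$ is not identically zero, because $h\neq0$ somewhere in the interior of each arc, where $T\neq0$. By continuity, $\int_0^{2\pi}hT\,d\theta>0$. However, the hypotheses say exactly that $h$ is orthogonal to $1,\cos\theta,\sin\theta,\cos2\theta,\sin2\theta$, so this integral is zero. This contradiction shows that $2k\geq6$, which yields six distinct zeros.

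The main point needing care is the definition of the sign-change partition when $h$ vanishes on intervals. I would handle it as follows. Take a maximal finite family of points $t_1<\dots<t_M$ cyclically ordered with $h(t_i)\neq0$ and signs alternating, where $M$ is even by cyclicity. Between $t_i$ and $t_{i+1}$, the intermediate value theorem gives a zero $\theta_i$. By maximality, $h$ has a constant weak sign on each arc $[\theta_{i-1},\theta_i]$. If $M$ were unbounded, there would be infinitely many zeros and we would be done. The degree count for $T$, namely $k$ factor pairs giving degree $k$, is routine via $\sin a\sin b=\tfrac12(\cos(a-b)-\cos(a+b))$.
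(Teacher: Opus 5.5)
Your proof is correct and is essentially the paper's argument: the test function $T(\theta)=\prod_{i=1}^{2k}\sin\frac{\theta-\theta_i}{2}$ is a trigonometric polynomial of degree $k\leq2$ that changes sign exactly at the $\theta_i$, so it forces $\int_0^{2\pi}hT\,d\theta>0$ and contradicts the orthogonality hypotheses. The paper avoids your maximal-family bookkeeping by assuming at most five zeros from the start, which already makes the zero set finite. If you keep your version, $\theta_i$ cannot be an arbitrary intermediate-value zero, since a zero at which $h$ touches $0$ without changing sign breaks the constant-weak-sign claim; take instead $\theta_i=\sup\{s\in[t_i,t_{i+1}]:h(s)h(t_i)>0\}$, and then maximality does give a constant weak sign on each arc.
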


\begin{proof}
Suppose that $h$ has at most five zeros.
Then its zero set is finite, and by the intermediate value theorem $h$ has constant sign on each component of the complement.
We say that $h$ changes sign at a zero if it takes opposite signs immediately before and after it.
Going once around the circle returns $h$ to its initial sign, so the number of sign changes is even, say $2k$, and $2k\leq4$.
Let $\theta_1,\ldots,\theta_{2k}$ be the zeros at which $h$ changes sign, and put
\[
 q(\theta)=\prod_{j=1}^{2k}\sin\frac{\theta-\theta_j}{2},
\]
with $q=1$ if $k=0$.

The identity
\[
 \sin\frac{\theta-\alpha}{2}\,\sin\frac{\theta-\beta}{2}
 =\frac12\Bigl(\cos\frac{\alpha-\beta}{2}
 -\cos\Bigl(\theta-\frac{\alpha+\beta}{2}\Bigr)\Bigr),
\]
applied to the factors in pairs, shows that $q$ is a $2\pi$-periodic trigonometric polynomial of degree at most $k\leq2$.
As a function on $\R$, the $j$th factor vanishes exactly on $\theta_j+2\pi\Z$ and changes sign at each of these points.
Since the $\theta_j$ are distinct modulo $2\pi$, the function $q$ changes sign at each $\theta_j$ and has no other zeros on the circle.

The points $\theta_1,\ldots,\theta_{2k}$ divide the circle into $2k$ arcs (for $k=0$, we take the whole circle).
On each arc, $h$ does not change sign, so $h\geq0$ or $h\leq0$ there, while $q$ has a constant strict sign.
Both signs change across every $\theta_j$.
After replacing $q$ by $-q$ if necessary, we obtain $hq\geq0$ on the whole circle, with strict inequality outside a finite set.
Hence
\[
 \int_0^{2\pi}h(\theta)q(\theta)\,d\theta>0.
\]
On the other hand, $q$ is a linear combination of $1$, $\cos\theta$, $\sin\theta$, $\cos2\theta$ and $\sin2\theta$, so the hypotheses imply that this integral vanishes, a contradiction.
\end{proof}

\subsection{Trace-zero fields on the sphere}
For a continuous function $h$ on a circle $S(E)$, the operator $\av_E\bigl[h(u)\bigl(u\otimes u-\tfrac12I_E\bigr)\bigr]$ on $E$ is symmetric and has trace zero.
In an orthonormal basis of $E$, its entries are, up to a factor $\tfrac12$, the averages of $h$ against $\cos2\theta$ and $\sin2\theta$.
Thus the second Fourier mode of $h$ is a symmetric trace-zero operator on $E$, and it vanishes exactly when this operator does.

\begin{lemma}\label{lem:tensor}
For each $v\in\Sph^2$, let $T_v$ be a symmetric linear map of $v^\perp$ with trace zero.
If the map $v\mapsto T_v\pi_v$ from $\Sph^2$ to the space of $3\times3$ matrices is continuous, then $T_v=0$ for some $v\in\Sph^2$.
\end{lemma}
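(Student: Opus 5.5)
Suppose, for contradiction, that $T_v\neq0$ for every $v\in\Sph^2$, and build from $T$ a continuous line field on $\Sph^2$. That is impossible by the hairy ball theorem: a line field on $\Sph^2$ would lift to a nonvanishing tangent vector field, since $\Sph^2$ is simply connected and the associated double cover is trivial.

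The construction uses the spectrum. A nonzero symmetric trace-zero map of the plane $v^\perp$ has eigenvalues $\pm\lambda_v$ with $\lambda_v>0$, and therefore a well-defined positive eigenline $\ell_v\subset v^\perp$. Continuity of $v\mapsto\ell_v$ follows from continuity of $v\mapsto T_v\pi_v$, which is exactly the form of the hypothesis. Explicitly, $\lambda_v^2=\tfrac12\tr\bigl((T_v\pi_v)^2\bigr)$, so $\lambda_v$ is continuous and positive. The orthogonal projection onto $\ell_v$, viewed as a $3\times3$ matrix, is
\[
 P_v=\frac{1}{2\lambda_v}\,T_v\pi_v+\frac12\,\pi_v ,
\]
because $T_v\pi_v$ equals $\lambda_v$ on $\ell_v$, $-\lambda_v$ on $\ell_v^\perp\cap v^\perp$, and $0$ on $\R v$. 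Since $\pi_v=I-v\otimes v$ is continuous, $P_v$ is a continuous family of rank-one projections onto lines tangent to the sphere.

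To get a vector field, consider the double cover $\{(v,w):v\in\Sph^2,\ w\in\ell_v,\ |w|=1\}\to\Sph^2$. It is locally trivial: near $v_0$, pick a unit vector $w_0\in\ell_{v_0}$ and use $P_vw_0/|P_vw_0|$, which is continuous and nonzero near $v_0$. A double cover of the simply connected, locally path-connected space $\Sph^2$ is trivial, so it has a continuous section $w(v)$. Then $w(v)$ is a unit vector in $v^\perp$ depending continuously on $v$, which contradicts the hairy ball theorem. Hence $T_v=0$ for some $v$.

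The main obstacle is only the lifting step from a line field to a vector field. Everything else is linear algebra. I would cite the covering space argument above. Equivalently, one can note that a tangent line field on $\Sph^2$ would give a reduction of the tangent bundle to $O(1)$, which forces the Euler characteristic to vanish. In fact the general fact that line fields exist only on surfaces of Euler characteristic zero would give the result directly.
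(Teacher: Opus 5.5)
Your proof is correct and follows the paper's argument essentially step for step. It uses the same formula $\lambda_v^2=\tfrac12\tr\bigl((T_v\pi_v)^2\bigr)$ and the same eigenline projection $P_v$, the same local trivialization of the double cover by $P_vw_0/|P_vw_0|$, and then the lifting criterion on the simply connected $\Sph^2$ to get a unit tangent field that contradicts the hairy ball theorem.
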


\begin{proof}
Suppose that $T_v\neq0$ for every $v$.
A nonzero symmetric trace-zero map of a plane has eigenvalues $\lambda$ and $-\lambda$ for some $\lambda>0$.
Here $\lambda(v)=\bigl(\tfrac12\tr\bigl((T_v\pi_v)^2\bigr) \bigr)^{1/2}$ depends continuously on $v$, and so does
\[
 P_v=\frac{T_v\pi_v+\lambda(v)\pi_v}{2\lambda(v)},
\]
the orthogonal projection of $\R^3$ onto the eigenline $L_v\subset v^\perp$ of $T_v$ for the eigenvalue $\lambda(v)$.

Consider the map
\[
 \widetilde S=\{(v,w)\in\Sph^2\times\Sph^2:w\in L_v\}\longrightarrow\Sph^2,
 \qquad(v,w)\longmapsto v.
\]
Fix $v_0\in\Sph^2$ and a unit vector $w_0\in L_{v_0}$.
For $v$ in a neighbourhood $N$ of $v_0$, we have $P_vw_0\neq0$, and the fibre over $v$ consists of the two points $(v,\pm P_vw_0/|P_vw_0|)$.
Therefore the part of $\widetilde S$ over $N$ is the disjoint union of the graphs of the continuous maps $v\mapsto\pm P_vw_0/|P_vw_0|$, and $\widetilde S\to\Sph^2$ is a two-sheeted covering map.

The sphere is simply connected and locally path-connected.
The lifting criterion \cite[Proposition~1.33]{Hatcher}, applied to the identity map of $\Sph^2$, therefore gives a continuous section $v\mapsto(v,w(v))$.
Then $w$ is a continuous unit tangent vector field on $\Sph^2$, which contradicts the hairy ball theorem \cite[Theorem~2.28]{Hatcher}.
Indeed, the map
\[
 (v,t)\longmapsto\cos(\pi t)\,v+\sin(\pi t)\,w(v),\qquad0\leq t\leq1,
\]
would be a homotopy from the identity of $\Sph^2$ to the antipodal map, whose degrees are $1$ and~$-1$.
\end{proof}

\subsection{Plane sections of a star-shaped surface}
For a continuous function $\rho:\Sph^2\to(0,\infty)$, the set $\{\rho(u)u:u\in\Sph^2\}$ is the boundary of the star-shaped body with radial function $\rho$.
The next proposition handles the case of Theorem~\ref{thm:six} in which the Hessian of $F$ is definite on a three-dimensional subspace.

\begin{proposition}\label{prop:radial}
Let $\rho:\Sph^2\to(0,\infty)$ be continuous, and put $\Sigma=\{\rho(u)u:u\in\Sph^2\}$.
For every continuous function $G:\Sigma\to\R$, there are a two-dimensional linear subspace $E\subset\R^3$ and an affine function $\ell:E\to\R$ such that $G(x)=\ell(x)$ at six distinct points $x\in\Sigma\cap E$.
\end{proposition}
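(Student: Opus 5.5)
The plan is to reduce the statement to the two lemmas just proved: Lemma~\ref{lem:zeros} supplies six zeros once the modes $0,\pm1,\pm2$ vanish, and Lemma~\ref{lem:tensor} supplies a plane on which the mode $\pm2$ vanishes. The first step is to turn the affine condition on a section into a statement about a function on the unit circle $S(E)$. For a plane $E$, the section $\Sigma\cap E$ is the closed curve $\{\rho(u)u:u\in S(E)\}$. For $c\in\R$ and $a\in E$, put $\ell(x)=c+\langle a,x\rangle$. The point $x=\rho(u)u$ satisfies $G(x)=\ell(x)$ exactly when
\[
 h_E(u)=\frac{G(\rho(u)u)-c}{\rho(u)}-\langle a,u\rangle
\]
vanishes, because $\rho>0$. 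Distinct $u\in S(E)$ give distinct points $\rho(u)u$. It therefore suffices to find $E$, $c$ and $a$ such that $h_E$ has six distinct zeros on $S(E)$. Dividing by $\rho$ is what makes the linear part of $\ell$ contribute a pure first Fourier mode $\langle a,u\rangle$.

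The second step is to kill the modes $0$ and $\pm1$ by choosing $(c,a)$ depending on $E$. Writing $u=\cos\theta\,e_1+\sin\theta\,e_2$ for an orthonormal basis $e_1,e_2$ of $E$, arclength is $d\theta$. The conditions $\av_E[h_E]=0$ and $\av_E[h_E\,u]=0$ are then three real linear equations in $(c,a)\in\R\times E$. Using $\av_E[u]=0$ and $\av_E[u\otimes u]=\tfrac12I_E$, the system reads
\[
 c\,\av_E[1/\rho]=\av_E[G/\rho],\qquad
 c\,\av_E[u/\rho]+\tfrac12a=\av_E[(G/\rho)\,u],
\]
where $G/\rho$ stands for $G(\rho(u)u)/\rho(u)$. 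This system is triangular with positive diagonal entries $\av_E[1/\rho]$ and $\tfrac12$, so it has a unique solution. The solution depends continuously on $E$. To make this precise, take $E=v^\perp$ and write every average as an integral over a parametrization $u=\cos\theta\,e_1(v)+\sin\theta\,e_2(v)$ with a local continuous frame. The resulting quantities are frame-independent, so they define global continuous functions of $v$.

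The third step handles the second mode. Put
\[
 T_v=\av_{v^\perp}\bigl[h_{v^\perp}(u)\bigl(u\otimes u-\tfrac12I_{v^\perp}\bigr)\bigr].
\]
This is a symmetric trace-zero map of $v^\perp$. By the remark preceding Lemma~\ref{lem:tensor}, it vanishes exactly when the $\cos2\theta$ and $\sin2\theta$ coefficients of $h_{v^\perp}$ vanish. The map $v\mapsto T_v\pi_v$ is given by the frame-independent integral $\av[h(u)(u\otimes u-\tfrac12\pi_v)]$, computed as a $3\times3$ matrix. It is continuous because $h$ depends continuously on $(v,\theta)$ locally and $\Sph^2$ is compact. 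Lemma~\ref{lem:tensor} then yields $v$ with $T_v=0$. For $E=v^\perp$, the function $\theta\mapsto h_E(\cos\theta\,e_1+\sin\theta\,e_2)$ is continuous and orthogonal to $1,\cos\theta,\sin\theta,\cos2\theta,\sin2\theta$. Lemma~\ref{lem:zeros} gives six distinct zeros, and hence six distinct points of $\Sigma\cap E$ at which $G=\ell$.

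The main obstacle I expect is the bookkeeping in the second step. One must choose the weight $1/\rho$ so that the conditions on $(c,a)$ become linear and uniquely solvable. One must also verify that the resulting dependence on $v$ is continuous in the precise sense Lemma~\ref{lem:tensor} requires, namely continuity of $T_v\pi_v$ as a matrix-valued map, even though no global frame of $v^\perp$ exists. I would handle this by expressing everything through frame-free integrals, such as integrals over $S(v^\perp)$ written via local frames, and checking that they are independent of the frame.
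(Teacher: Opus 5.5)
Your proposal is correct and follows the paper's proof. You choose an affine function $\ell_E$ that depends continuously on the plane so that modes $0,\pm1$ vanish, apply Lemma~\ref{lem:tensor} to the trace-zero operator of the second mode, and finish with Lemma~\ref{lem:zeros}. The only difference is a normalization: you divide $H-\ell$ by $\rho$, so that $(c,a)$ solve a triangular system, whereas the paper keeps $H(u)-\ell(\rho(u)u)$ and inverts the positive definite map $M_E=\av_E[\rho(u)\,u\otimes u]$. In both versions the zeros of the auxiliary function are exactly the points of $\Sigma\cap E$ where $G=\ell$.
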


\begin{proof}
Put $H(u)=G(\rho(u)u)$, which is a continuous function on $\Sph^2$, and fix a two-dimensional linear subspace $E\subset\R^3$.
We first subtract from $H$ an affine function of the point $\rho(u)u$, chosen so that the difference has zero mean and zero first Fourier mode on $S(E)$.
The symmetric linear map
\[
 M_E=\av_E\bigl[\rho(u)\,u\otimes u\bigr]
\]
of $E$ is positive definite, since $\langle M_Ez,z\rangle= \av_E[\rho(u)\langle u,z\rangle^2]>0$ for $z\in E\setminus\{0\}$.
We define
\[
 b_E=M_E^{-1}\av_E\bigl[H(u)\,u\bigr]\in E,\qquad
 a_E=\av_E\bigl[H(u)-\rho(u)\langle b_E,u\rangle\bigr],
\]
and, for $u\in S(E)$,
\[
 h_E(u)=H(u)-a_E-\langle b_E,\rho(u)u\rangle.
\]
Since $\av_E[u]=0$ and $\av_E[\rho(u)\langle b_E,u\rangle u]=M_Eb_E$, these choices give
\begin{equation}\label{eq:low-modes}
 \av_E[h_E]=0\qquad\text{and}\qquad\av_E[h_E(u)\,u]=0.
\end{equation}

The correction $a_E+\langle b_E,x\rangle$ is affine in the point $x=\rho(u)u$ of $\Sigma$, not in the direction $u$.
Subtracting a function of the form $a+\langle b,u\rangle$ would eliminate the same two modes, but it would not be the restriction of an affine function of $x$, and the six points found below need not be coplanar.

It remains to eliminate the second mode.
The linear map
\[
 T_E=\av_E\bigl[h_E(u)\bigl(u\otimes u-\tfrac12I_E\bigr)\bigr]
\]
of $E$ is symmetric and has trace zero, because $\tr(u\otimes u)=|u|^2=1$ and $\tr I_E=2$.
We claim that the maps $T_{v^\perp}$, for $v\in\Sph^2$, satisfy the hypothesis of Lemma~\ref{lem:tensor}.

Fix $v_0\in\Sph^2$ and a basis $c_1,c_2$ of $v_0^\perp$.
For $v$ in a neighbourhood $N$ of $v_0$ in $\Sph^2$, the vectors $\pi_vc_1,\pi_vc_2$ form a basis of $v^\perp$, and the Gram--Schmidt process turns them into an orthonormal basis $e_1(v),e_2(v)$ of $v^\perp$ that depends continuously on $v$.
With $u=\cos\theta\,e_1(v)+\sin\theta\,e_2(v)$, we have
\[
 \av_{v^\perp}[\phi]=\frac1{2\pi}\int_0^{2\pi}\phi\bigl(\cos\theta\,
 e_1(v)+\sin\theta\,e_2(v)\bigr)\,d\theta,
\]
and the integral of a function that is continuous in $(v,\theta)$ depends continuously on $v$.
In the basis $e_1(v),e_2(v)$, the map $M_{v^\perp}$ is a positive definite $2\times2$ matrix depending continuously on $v$, and so is its inverse.
Hence $b_{v^\perp}$ and $a_{v^\perp}$ depend continuously on $v$, the function $(v,\theta)\mapsto h_{v^\perp}(u)$ is continuous, and finally
\[
 T_{v^\perp}\pi_v=\frac1{2\pi}\int_0^{2\pi}h_{v^\perp}(u)\bigl(u\otimes
 u-\tfrac12\pi_v\bigr)\,d\theta
\]
depends continuously on $v\in N$.
Lemma~\ref{lem:tensor} therefore gives a plane $E$ with $T_E=0$.

In an orthonormal basis of this plane $E$, we write $u=(\cos\theta,\sin\theta)$.
Then
\[
 u\otimes u-\tfrac12I_E=\frac12
 \begin{pmatrix}
 \cos2\theta&\sin2\theta\\
 \sin2\theta&-\cos2\theta
 \end{pmatrix},
\]
so $T_E=0$ means that the averages of $h_E$ against $\cos2\theta$ and $\sin2\theta$ vanish.
By \eqref{eq:low-modes}, the averages of $h_E$ against $1$, $\cos\theta$ and $\sin\theta$ vanish as well.
Lemma~\ref{lem:zeros} now gives six distinct zeros $u_1,\ldots,u_6\in S(E)$ of $h_E$.
The points $x_j=\rho(u_j)u_j$ lie in $\Sigma\cap E$, they are distinct because the $u_j$ are distinct and $\rho>0$, and the affine function $\ell(x)=a_E+\langle b_E,x\rangle$ on $E$ satisfies
\[
 G(x_j)=H(u_j)=\ell(x_j)\qquad(1\leq j\leq6).\qedhere
\]
\end{proof}

\subsection{Hessians of small rank}
For the degenerate case of Theorem~\ref{thm:six}, we use the following lemma.

\begin{lemma}\label{lem:hessian}
Let $W\subset\R^m$ be a nonempty open set, and let $F:W\to\R$ be of class $C^2$.
Suppose that the Hessian $D^2F$ has the same rank $r<m$ at every point of $W$.
Then there are $a\in W$, a nonzero vector $v\in\R^m$ and $\epsilon>0$ such that, for $|t|<\epsilon$, the point $a+tv$ lies in $W$ and $F(a+tv)$ is an affine function of $t$.
\end{lemma}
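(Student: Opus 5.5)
The plan is to find a line segment along which $F$ is affine by working in the kernel of the Hessian. Since $F$ is only $C^2$, the Hessian $D^2F$ is merely continuous, so the constant rank theorem cannot be applied to $D^2F$ itself. Instead I will apply it to the gradient map $\nabla F:W\to\R^m$, which is $C^1$ with derivative $D^2F$ of constant rank $r$.

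The constant rank theorem gives local $C^1$ coordinates near a chosen point $a_0\in W$ in which $\nabla F$ has the form $(x_1,\ldots,x_m)\mapsto(x_1,\ldots,x_r,0,\ldots,0)$, up to $C^1$ changes of coordinates in source and target. Consequently the level set $\{\nabla F=c\}$ through $a_0$ is locally a $C^1$ submanifold $N$ of dimension $m-r\geq1$. Its tangent space at each point $x$ is $\ker D^2F(x)$.

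The key observation concerns $F$ restricted to such a level set. Since $\nabla F\equiv c$ on $N$, for any $C^1$ curve $\gamma$ in $N$ we have $\frac{d}{dt}F(\gamma(t))=\langle c,\gamma'(t)\rangle$. Hence $F-\langle c,\cdot\rangle$ is locally constant on $N$.

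This does not by itself give a straight segment, because $N$ may be curved. I want to show that $N$ is in fact locally an open piece of an affine subspace, or at least that it contains a segment.

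The main step is to show that the kernel distribution $x\mapsto\ker D^2F(x)$ is constant along the fibres. For $w\in\ker D^2F(x)$ I differentiate $\nabla F$ along curves in $N$. Since $\nabla F$ is constant on $N$, its derivative $D^2F(x)w$ vanishes for tangent directions $w$ of $N$, which is consistent but circular. To obtain genuine flatness, I will argue differently.

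Pick a point $a\in N$ and a unit vector $v\in T_aN=\ker D^2F(a)$. Consider the segment $a+tv$ and the function $\phi(t)=F(a+tv)$. Then $\phi''(t)=\langle D^2F(a+tv)v,v\rangle$, and I need this to vanish for small $|t|$. This is the main obstacle. One route is to use the classical fact that $C^2$ functions whose Hessian has constant rank are developable: the fibres of $\nabla F$ are open pieces of affine subspaces. I will try to prove this via the Legendre-type argument below.

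On the image $\nabla F(W)$, which near $a_0$ is an $r$-dimensional $C^1$ submanifold $M$, the value $F(x)-\langle\nabla F(x),x\rangle$ is constant on each fibre. Differentiating the resulting function $\psi$ on $M$ shows that every point $x$ of the fibre over $c$ satisfies $\langle d c, x\rangle=-d\psi(dc)$ for all tangent vectors $dc\in T_cM$. This is a system of $r$ affine equations in $x$, so the fibre lies in an affine subspace of dimension $m-r$. Being a $C^1$ manifold of that same dimension, the fibre is an open subset of this affine subspace.

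The differentiation requires care, since $x$ ranges over a fibre and $\psi$ need only be shown differentiable along $M$. I expect to get this by using a $C^1$ local section $s$ of $\nabla F$ over $M$ from the constant rank theorem, writing $\psi(c)=F(s(c))-\langle c,s(c)\rangle$, and noting that the terms involving $ds$ cancel because $\nabla F(s(c))=c$. This gives $d\psi=-\langle\cdot,s(c)\rangle$. Applying the same identity with the section passing through any chosen fibre point $x$ yields the affine equations.

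Then $N$ contains a segment $a+tv$, $|t|<\epsilon$, with $v\neq0$. On this segment $F(a+tv)-\langle c,a+tv\rangle$ is constant, so $F(a+tv)$ is affine in $t$, as the lemma claims.
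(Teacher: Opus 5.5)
Your final argument is correct and is essentially the paper's proof. The paper also applies the constant rank theorem to $\nabla F$ and writes $\nabla F(x(s,t))=p(s)$. It shows $F(x(s,t))-\langle p(s),x(s,t)\rangle=A(s)$ and differentiates in $s$ to get $\partial_{s_j}A(s)=-\langle\partial_{s_j}p(s),x(s,t)\rangle$. This is your identity $d\psi=-\langle\cdot,x\rangle$ with $c=p(s)$, where the curves $s\mapsto x(s,t)$ play the role of your sections through each fibre point; the fibre is then open in an $(m-r)$-dimensional affine subspace.

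The exploratory detours about the kernel distribution and $\phi''$ can be dropped. The case $r=0$, where $\nabla F$ is locally constant, is immediate.
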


\begin{proof}
If $r=0$, the gradient $\nabla F$ is constant on every open ball in $W$, so $F$ is affine there.
Suppose now that $r\geq1$.
The map $\nabla F:W\to\R^m$ is of class $C^1$, and its derivative $D^2F$ has constant rank $r$.
By the constant rank theorem, there are an open set $W_0\subset W$, open boxes $B_1\subset\R^r$ and $B_2\subset\R^{m-r}$, a $C^1$ diffeomorphism $x:B_1\times B_2\to W_0$, and a $C^1$ diffeomorphism $\psi$ from an open set containing $\nabla F(W_0)$ onto an open subset of $\R^m$, such that $\psi(\nabla F(x(s,t)))=(s,0)$ for all $(s,t)\in B_1\times B_2$.
Thus $\nabla F(x(s,t))=p(s)$, where $p(s)=\psi^{-1}(s,0)$, and $Dp(s)$ has rank $r$ because $\psi^{-1}$ is a diffeomorphism.

For $1\leq k\leq m-r$, the chain rule gives
\[
 \partial_{t_k}\bigl[F(x(s,t))-\langle p(s),x(s,t)\rangle\bigr]
 =\bigl\langle\nabla F(x(s,t))-p(s),\,\partial_{t_k}x(s,t)\bigr\rangle=0.
\]
Since $B_2$ is connected, $F(x(s,t))-\langle p(s),x(s,t)\rangle=A(s)$ for a $C^1$ function $A$ on $B_1$.
Differentiating this identity with respect to $s_j$, and using $\nabla F(x(s,t))=p(s)$ once more, we find
\begin{equation}\label{eq:affine-fibre}
 \partial_{s_j}A(s)=-\bigl\langle\partial_{s_j}p(s),\,x(s,t)\bigr\rangle
 \qquad(1\leq j\leq r,\ t\in B_2).
\end{equation}

Now fix $s\in B_1$.
The vectors $\partial_{s_1}p(s),\ldots, \partial_{s_r}p(s)$ are linearly independent, so \eqref{eq:affine-fibre} places the points $x(s,t)$, $t\in B_2$, in an affine subspace $K$ of dimension $m-r\geq1$.
The map $t\mapsto x(s,t)$ is an injective immersion of the $(m-r)$-dimensional box $B_2$ into $K$, so its image is open in $K$ by the inverse function theorem.
On this image, $F(x)= \langle p(s),x\rangle+A(s)$ is an affine function of $x$, so $F$ is affine on every line segment in the image.
\end{proof}

\subsection{Proof of Theorem~\ref{thm:six}}
Either at every $x\in U$ the Hessian $D^2F(x)$ has at most two positive and at most two negative eigenvalues, or at some $a\in U$ it has three eigenvalues of the same strict sign.

\medskip
\noindent\textit{Case 1: at every $x\in U$, the Hessian $D^2F(x)$ has at most two positive and at most two negative eigenvalues.} Then $\rank D^2F(x)\leq4$ for all $x\in U$.
Let $r$ be the largest value of this rank, attained at a point $x_0$.
If $r\geq1$, some $r\times r$ minor of $D^2F(x_0)$ is nonzero.
It stays nonzero on a neighbourhood $W\subset U$ of $x_0$, and by maximality $D^2F$ has rank exactly $r$ on $W$.
If $r=0$, we take $W=U$.

Since $r\leq4<m$, Lemma~\ref{lem:hessian} gives $a\in W$, a nonzero $v\in\R^m$, $\epsilon>0$ and $\alpha,\beta\in\R$ with $F(a+tv)=\alpha+\beta t$ for $|t|<\epsilon$.
The graph points
\[
 \bigl(a+tv,\ \alpha+\beta t,\ G(a+tv)\bigr)\qquad(|t|<\epsilon)
\]
lie in the set $\{(a+tv,\ \alpha+\beta t,\ z):t,z\in\R\}$, which is an affine two-plane because $v\neq0$.
Six distinct values of $t$ give six distinct points.

\medskip
\noindent\textit{Case 2: at some $a\in U$, the Hessian $D^2F(a)$ has three eigenvalues of the same strict sign.} Let $B:\R^3\to\R^m$ be a linear isometry onto the span of three corresponding orthonormal eigenvectors, and choose $\sigma\in\{-1,1\}$ such that $\sigma B^{\mathsf T}D^2F(a)B$ is positive definite.
For $y$ near $0\in\R^3$, put
\[
 F_0(y)=\sigma\bigl(F(a+By)-F(a)-\langle\nabla F(a),By\rangle\bigr),
 \qquad G_0(y)=G(a+By).
\]
Then $F_0(0)=0$, $\nabla F_0(0)=0$, and $D^2F_0(0)=\sigma B^{\mathsf T}D^2F(a)B$ is positive definite.
By continuity, there are $R>0$ and $\lambda>0$ such that $a+By\in U$ and $\langle D^2F_0(y)z,z\rangle\geq\lambda|z|^2$ for all $z\in\R^3$ whenever $|y|\leq R$.

For $u\in\Sph^2$, the function $\varphi_u(r)=F_0(ru)$ on $[0,R]$ satisfies $\varphi_u(0)=\varphi_u'(0)=0$ and $\varphi_u''(r)=\langle D^2F_0(ru)u,u\rangle\geq\lambda$.
Therefore
\[
 \varphi_u'(r)\geq\lambda r
 \qquad\text{and}\qquad
 \varphi_u(r)\geq\tfrac12\lambda r^2
 \qquad(0\leq r\leq R).
\]
Fix $c$ with $0<c<\frac12\lambda R^2$.
Then $\varphi_u$ increases strictly on $[0,R]$ from $0$ to a value larger than $c$, so there is exactly one $\rho(u)\in(0,R)$ with $F_0(\rho(u)u)=c$.
The implicit function theorem, applied to $(w,r)\mapsto F_0(rw)-c$ near $(u,\rho(u))$, where the derivative in $r$ equals $\varphi_u'(\rho(u))\geq\lambda\rho(u)>0$, together with the uniqueness of $\rho(u)$, shows that $\rho$ is continuous.

Proposition~\ref{prop:radial}, applied to $\Sigma=\{\rho(u)u:u\in\Sph^2\}$ and to the restriction of $G_0$ to $\Sigma$, gives a two-dimensional linear subspace $E\subset\R^3$, an affine function $\ell:E\to\R$, and six distinct points $y_1,\ldots,y_6\in\Sigma\cap E$ with $G_0(y_j)=\ell(y_j)$.
Since also $F_0(y_j)=c$, the map $(F_0,G_0)$ agrees at these points with the affine map $(c,\ell)$.
Hence the six points $(y_j,F_0(y_j),G_0(y_j))\in\R^3\times\R\times\R$ lie in the affine two-plane
\[
 \Pi=\{(y,c,\ell(y)):y\in E\}\subset\R^5.
\]

Finally, the affine map $\Psi:\R^5\to\R^{m+2}$ given by
\[
 \Psi(y,s,w)=\bigl(a+By,\ F(a)+\langle\nabla F(a),By\rangle+\sigma s,\ w\bigr)
\]
is injective, since $B$ is injective and $\sigma\neq0$.
As $\sigma^2=1$, it sends $(y,F_0(y),G_0(y))$ to $(a+By,F(a+By),G(a+By))$.
Therefore the six points $\Psi(y_j,c,\ell(y_j))$ are distinct points of the graph of $(F,G)$, and they lie in the affine two-plane $\Psi(\Pi)$.\qed

\begin{remark*}
The hypothesis $m\geq5$ is used only in Case~1, through the inequality $4<m$; Case~2 needs only $m\geq3$.
For $m=4$, the argument in Case~1 need not apply: the function $F(x)=x_1^2+x_2^2-x_3^2-x_4^2$ has a nonsingular Hessian with two positive and two negative eigenvalues at every point.
Thus the rank need not be less than $m$, while Case~2 is unavailable.
In that dimension, Theorem~\ref{thm:polygon} with $p=5$ gives five coplanar points instead.
\end{remark*}

{\raggedright

}
\thispagestyle{plain}

\begin{thebibliography}{12}

\bibitem{ChaperonMeyer}
M.~Chaperon and D.~Meyer, \emph{On a theorem of Ren\'e Thom in g\'eom\'etrie finie}, Enseign. Math. (2) \textbf{55} (2009), no.~3--4, 329--357.

\bibitem{Gadbled}
A.~Gadbled, \emph{Families of monotone symplectic manifolds constructed via symplectic cut and their Lagrangian submanifolds}, Algebr. Geom. Topol. \textbf{11} (2011), no.~4, 2319--2368.

\bibitem{Green}
B.~Green, \emph{100 open problems}, problem collection, \url{https://people.maths.ox.ac.uk/greenbj/papers/open-problems.pdf}, version consulted September 30, 2026.

\bibitem{Hatcher}
A.~Hatcher, \emph{Algebraic topology}, Cambridge University Press, Cambridge, 2002.

\bibitem{HatcherVB}
A.~Hatcher, \emph{Vector bundles and K-theory}, version 2.2, 2017, \url{https://pi.math.cornell.edu/~hatcher/VBKT/VB.pdf}.

\bibitem{KarasevVolovikov}
R.~N. Karasev and A.~Yu. Volovikov, \emph{Knaster's problem for almost $(\mathbb Z_p)^k$-orbits}, Topology Appl. \textbf{157} (2010), no.~5, 941--945.

\bibitem{Makeev1995}
V.~V. Makeev, \emph{Affine-inscribed and affine-circumscribed polygons and polytopes}, J. Math. Sci. (New York) \textbf{91} (1998), no.~6, 3518--3525.

\bibitem{Makeev2004}
V.~V. Makeev, \emph{Approximation of two-dimensional cross-sections of convex bodies by disks and ellipses}, St. Petersburg Math. J. \textbf{16} (2005), no.~6, 1043--1049.

\bibitem{Makeev2007}
V.~V. Makeev, \emph{Some extremal problems for vector bundles}, St. Petersburg Math. J. \textbf{19} (2008), no.~2, 261--277.

\bibitem{Makeev}
V.~V. Makeev, \emph{The Knaster problem and almost spherical sections}, Math. USSR-Sb. \textbf{66} (1990), no.~2, 431--438.

\bibitem{Tabachnikov}
S.~Tabachnikov, \emph{Converse Sturm--Hurwitz--Kellogg theorem and related results}, J. Fixed Point Theory Appl. \textbf{3} (2008), no.~1, 121--130.

\bibitem{Thom}
R.~Thom, \emph{Sur les vari\'et\'es d'ordre fini}, in: Global Analysis (Papers in Honor of K.~Kodaira), Univ. Tokyo Press, Tokyo, 1969, pp.~397--401.

\end{thebibliography}
\end{document}